\documentclass[12pt]{amsart}
\usepackage{amsmath,amssymb,amsfonts,amsthm,amscd,indentfirst}
\textheight 8.5in
\textwidth 6 in
\topmargin 0.0cm
\oddsidemargin 0.5cm \evensidemargin 0.5cm
\parskip 0.0cm
\usepackage{amsmath,amssymb,amsfonts,amsthm,amscd}
\usepackage{indentfirst}
\usepackage{hyperref}
\usepackage{enumitem}

\numberwithin{equation}{section}

\def\dsum{\displaystyle\sum}
\def\dfrac{\displaystyle\frac}

\newtheorem{prop}{Proposition}[section]
\newtheorem{theo}[prop]{Theorem}
\newtheorem{lemm}[prop]{Lemma}

\def\and{\quad{\rm and}\quad}

\def\<{\langle}
\def\>{\rangle}

\usepackage{graphicx}

   %% And a not so common one.

\begin{document}

\title[Hypersurfaces of constant sum Hessian curvature]{Hypersurfaces of constant sum Hessian curvature in Hyperbolic space}
\author[Jianbo Yang]{Jianbo Yang}
\address{College of Science, Harbin University of Science and Technology, 52 Xuefu Road, Harbin, 150080, Heilongjiang Province, China.}
\email{2320800008@stu.hrbust.edu.cn}
\author[Yueming Lu]{Yueming Lu}
\address{College of Science, Harbin University of Science and Technology, 52 Xuefu Road, Harbin, 150080, Heilongjiang Province, China.}
\email{yueminglu@hrbust.edu.cn}
\begin{abstract}
In this paper, we study the asymptotic Plateau problem  in hyperbolic space for constant sum Hessian curvature. More precisely, given a asymptotic boundary $\Gamma$, one seeks a complete hypersurface $\Sigma$ in $\mathbb{H}^{n+1}$ satisfying $\sigma_{n-1}(\kappa)+\alpha\sigma_{n}(\kappa)=\sigma\in (0,n),\,\,\partial \Sigma=\Gamma$ where $\alpha$ is a non-negative number.
\end{abstract}

\maketitle 
\tableofcontents

\section{Introduction}
\subsection{Background}
Let $\Sigma$ be a smooth surface in $\mathbb{R}^{3}$. Consider a deformation $\Sigma_{t}$ in the interior and the area satisfies \begin{equation*}
    A(\Sigma_{t})\geq A(\Sigma).
\end{equation*}If for all such deformations we have\begin{equation*}
    \frac{d}{dt}A(\Sigma_{t}) \bigg |_{t=0}=0,
\end{equation*} $\Sigma$ be referred to as \emph{minimal surface}. A fundamental problem concerns the existence of such minimal surfaces. More generally, the existence of minimal submanifolds $\Sigma$ in arbitrary Riemannian manifolds $M$ constitutes a fundamental problem in Riemannian geometry. The cases $M=\mathbb{R}^{n}$ and $M=\mathbb{S}^{n}$ have been thoroughly researched by many authors and require no further discussion here.
\par It is noteworthy that the case $M=\mathbb{H}^{n+1}$, where we use the upper half-space model \begin{equation*}
    \mathbb{H}^{n+1}=\{(x,x_{n+1})\in \mathbb{R}^{n+1}:x_{n+1}>0\}
\end{equation*} equipped with the hyperbolic metric \begin{equation*}
    ds^{2}=\frac{1}{x_{n+1}^{2}}\sum_{i=1}^{n+1}dx_{i}^{2}.
\end{equation*}More precisely, let $\partial_{\infty}\mathbb{H}^{n+1}$ be the ideal boundary of $\mathbb{H}^{n+1}$ at infinity. Given a closed embedded smooth $(n-1)-$dimensional  submanifold $\Gamma\subset \partial_{\infty}\mathbb{H}^{n+1}$, one seeks a complete hypersurface $\Sigma$ in $\mathbb{H}^{n+1}$ satisfying \begin{equation}\label{problem(1.1)}
    f(\kappa)=\sigma,\,\,\,\,\partial\Sigma=\Gamma,
\end{equation}where $f$ is a smooth symmetric function of $n$ variables, $\kappa=(\kappa_{1},\cdots,\kappa_{n})$ are the principal curvatures of $\Sigma$ and $\sigma\in(0,1)$. Meanwhile, we may also consider the Poincaré ball model of hyperbolic space. Indeed, the two models are equivalent; for computational convenience we prefer the upper half-space model.
\par In 1980s, the asymptotic Plateau problem \eqref{problem(1.1)} for area-minimizing varieties was first addressed by Anderson \cite{A82,A83} employing geometric measure theory. Following this work, Tonegawa \cite{T} achieved an extension to constant mean curvature hypersurfaces. PDE theory provides a powerful framework for addressing such problems. Lin \cite{Lin,Lin1} pioneered a purely PDE-theoretic proof of Anderson's existence-uniqueness theorem while also establishing regularity properties of the solutions. Later, the asymptotic Plateau problem for constant mean curvature was investigated via PDE methods in the works of Nelli and Spruck \cite{NS} and Guan–Spruck \cite{GS00}. Labourie \cite{L} solved the case of hypersurfaces with constant Gauss curvature in $\mathbb{H}^3$. More generally, Rosenberg and Spruck \cite{RS} extended this result to hypersurfaces in $\mathbb{H}^{n+1}$. Guan, Spruck, Szapiel, and Xiao \cite{GSS, GS11, GSX} resolved the asymptotic Plateau problem for a broad class of functions $f$ defined on the positive cone $\Gamma_n$. For general $f$ satisfying some structure conditions defined on an open symmetric convex cone $K$ such that $\Gamma_{n}\subset K$, the asymptotic Plateau problem was solved by Guan and Spruck \cite{GS10} if $\sigma>\sigma_0$, where $0.3703<\sigma_0<0.3704$.
\par Building on the seminal work of Guan and Spruck \cite{GS10}, we consider specific types of $f$ defined on an certain open symmetric convex cone $K$ such that our results hold for all $\sigma\in(0,1)$. Lu \cite{S. Lu} studied the case where $f=\sigma_{n-1},\,\,K=\Gamma_{n-1}$ by using Ren-Wang's concavity inequality \cite{RW}.  In a recent preprint \cite{wang3}, Wang  was able to obtain the curvature estimates  for problem \eqref{problem(1.1)} with $f=\sigma_{n-2},\,\,K=\Gamma_{n-2}$ by using Ren-Wang's important concavity inequality \cite{RW1}. In \cite{wang3}, Wang also studied the case where $f=\left(\frac{H_{k}}{H_{l}}\right)^{\frac{1}{k-l}},\,1\leq k<l\leq n,\,\,K=\Gamma_{n}$ with curvature bounded from blew. Here $H_{k}$ is the normalized $k$-th elementary symmetric polynomial. The last remaining interesting case is $f=\sigma_{2},\,\,K=\Gamma_{2}$. Wang in \cite{Wang2} was able to solve this case in all dimensions by using almost-Jacobi inequality due to Shankar-Yuan \cite{RY}. For the case $f=\sigma_{k}$ with $2<k<n-2$, Hong-Wang was able to solve the problem \eqref{problem(1.1)} for semi-convex hypersurface in $\mathbb{H}^{n+1}$. However, the case for $f=\sigma_{k},\,\,K=\Gamma_{k}$ with $2<k<n-2$ remains open.
\par Very recently, Chen-Sui-Sun \cite{DZL} investigated another class of special curvature functions $f$, i.e. 
\begin{equation*}
    f(\kappa)=\textstyle \prod_{1 \leq i_{1}< i_{2}<\cdots <i_{p} \leq n}(\kappa_{i_{1}}+\kappa_{i_{2}}+\cdots+\kappa_{i_{p}})
\end{equation*}defined on $p$-convex cone \begin{equation*}
    K=\mathcal{P}_{p}=\{\kappa\in \mathbb{R}^n:\forall 1\leq i_{1} <i_{2}<\cdots <i_{p}\leq n,\kappa_{i_{1}}+\kappa_{i_{2}}+\cdots+\kappa_{i_{p}}>0\}.
\end{equation*}In \cite{DZL}, Chen-Sui-Sun was able to solve the problem \eqref{problem(1.1)} for the case $f=\mathcal{F}_{2},\,\,K=\mathcal{P}_{2}$ in $\mathbb{H}^{4}$. For general dimensions, the authors believe that the key to problem \eqref{problem(1.1)} lies in further exploring the structure of the curvature function $f$.
\subsection{Main results} In this paper, we consider the problem \eqref{problem(1.1)} for the case $f=S_{n},\;n>1$, where \begin{equation*}
    S_{n}=\sigma_{n-1}+\alpha\sigma_{n}.
\end{equation*}As stated in  Guan and Spruck \cite{GS10}, the key to problem \eqref{problem(1.1)} for the case $f=S_{n}$ lies in establishing curvature estimates for all $\sigma \in (0,n)$. We now provide some necessary notation.
\par Denote by \begin{equation*}
    S_{n}^{ii}(\lambda)=\frac{\partial S_{n}(\lambda)}{\partial \lambda_{i}},\;\;\;S_{n}^{ii,jj}(\lambda)=\frac{\partial^{2}S_{n}(\lambda)}{\partial\lambda_{i}\partial\lambda_{j}}.
\end{equation*}We denote Garding's cone by 
\begin{equation*}
    \Gamma_{k}=\{\lambda\in\mathbb{R}^{n}|\sigma_{1}(\lambda)>0,\cdots,\sigma_{k}(\lambda)>0\}.
\end{equation*}As point out in Li-Wang-Ren\cite{LRW}, the admissible  set of the $S_{n}$ is 
\begin{equation*}
    \widetilde{\Gamma}_{k}=\Gamma_{k-1}\cap \{\lambda\in\mathbb{R}^{n}|S_{k}(\lambda)>0\}.
\end{equation*}This means that $\widetilde{\Gamma}_{k}$ is a convex set. In the cone $\widetilde{\Gamma}_{k}$, $S_{n}^{ii}$ is elliptic.

The main results in the paper are as follows: 
\begin{theo}\label{theo1}
For $n\geq 2$, let $\Gamma=\partial\Omega$, where $\Omega$ is a bounded smooth domain in $\mathbb{R}^n$ with nonnegative mean curvature and $\sigma\in (0,n)$. Suppose $\Sigma$ is a $C^4$ vertical graph over $\Omega$ in $\mathbb{H}^{n+1}$ satisfying
\begin{align}\label{S_n}
S_{n}(\kappa)=\sigma,\quad \partial\Sigma=\Gamma,
\end{align}
where $\kappa\in \widetilde{\Gamma}_{n}$. If there exists a constant $A>0$ such that $\sigma_{n}(\kappa)>-A$, then we have
\begin{align*}
\max_{x\in \Sigma, 1\leq i\leq n}|\kappa_i(x)|\leq C+C\max_{x\in \partial\Sigma, 1\leq i\leq n}|\kappa_i(x)|,
\end{align*}
where $C$ is a constant depending only on $n, \Omega$, $A$, $\alpha$ and $\sigma$.
\end{theo}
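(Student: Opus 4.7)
The plan is a maximum-principle argument on a test function of the form
\[
  W \;=\; \log \kappa_1 \;+\; \phi(v) \;+\; N\, x_{n+1},
\]
where $\kappa_1$ denotes the largest principal curvature, $v$ is the usual height/angle function from the upper half-space model of $\mathbb{H}^{n+1}$, $\phi$ is an auxiliary monotone function, and $N>0$ is a large constant. If $W$ attains its maximum on $\partial\Sigma$, the conclusion is immediate. Otherwise, at an interior maximum $x_0$, I choose an orthonormal frame diagonalizing $(h_{ij})$ with $\kappa_1\ge\cdots\ge\kappa_n$, and use the standard perturbation trick (cf.\ \cite{S. Lu,wang3}) to treat $\kappa_1$ as a simple eigenvalue. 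Applying the linearized operator $\mathcal{L}=\sum_i S_n^{ii}\nabla_i\nabla_i$ then gives $\mathcal{L}W(x_0)\le 0$.

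Standard calculations in the upper half-space model produce, from $\mathcal{L}\phi(v)$ and $\mathcal{L} x_{n+1}$, a positive definite quantity of order $c\sum_i S_n^{ii}$ together with a term comparable to $\sum_i S_n^{ii}\kappa_i^2$; these are the driving positive quantities in every Guan--Spruck-type curvature estimate in $\mathbb{H}^{n+1}$. Combined with Simons' identity, the equation $S_n(\kappa)=\sigma$, and the critical-point condition $\nabla_i W(x_0)=0$, the problem reduces to controlling the cubic quantity
\[
  Q \;:=\; -\sum_{i\ne j} S_n^{ii,jj}\, h_{ii,1}\, h_{jj,1} \;-\; 2\sum_{i>1}\frac{S_n^{ii}-S_n^{11}}{\kappa_1-\kappa_i}\, h_{1i,1}^2 \;+\; \frac{1}{\kappa_1}\sum_i S_n^{ii}\, h_{ii,1}^2
\]
from above by $C\kappa_1\sum_i S_n^{ii}+C\kappa_1$. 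I decompose $S_n^{ii,jj}=\sigma_{n-1}^{ii,jj}+\alpha\,\sigma_n^{ii,jj}$ and handle the two parts separately. The $\sigma_{n-1}$ piece, which is concave on $\Gamma_{n-1}\supset\widetilde\Gamma_n$, is bounded via the Ren--Wang concavity inequality \cite{RW} exactly as in Lu \cite{S. Lu}. The $\sigma_n$ piece uses $\sigma_n^{ii,ii}=0$ and $\sigma_n^{ii,jj}=\sigma_{n-2}(\kappa\,|\,i,j)$ for $i\ne j$: the resulting contribution has favorable sign when $\sigma_n\ge 0$, and when $\sigma_n$ is negative but bounded below by $-A$ its sign-indefinite part is quantitatively controlled, which is the reason for the hypothesis $\sigma_n(\kappa)>-A$.

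The anticipated main obstacle is precisely this cubic estimate on the non-standard admissible cone $\widetilde\Gamma_n=\Gamma_{n-1}\cap\{S_n>0\}$. The mixed operator $S_n$ is not classically concave on $\widetilde\Gamma_n$, and the $\alpha\sigma_n$ correction can overwhelm the favorable Ren--Wang contribution if $\sigma_n$ is large and negative; the lower bound $\sigma_n>-A$ is essential for the estimate to close, and its presence is dictated by the structure of the problem rather than by the method. A secondary difficulty is the bookkeeping needed to separate indices $i$ with $\kappa_i$ comparable to $\kappa_1$ from those with $\kappa_i\ll\kappa_1$ when applying the Ren--Wang inequality on the larger cone $\widetilde\Gamma_n$; this is a technical adaptation of the arguments in \cite{S. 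Lu,wang3,LRW} rather than a conceptual obstruction. Choosing $N$ and $\phi$ suitably large in the end yields a strict inequality contradicting $\mathcal{L}W(x_0)\le 0$ once $\kappa_1(x_0)$ exceeds a constant depending only on $n,\Omega,A,\alpha,\sigma$, which gives the desired bound.
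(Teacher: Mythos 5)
Your overall architecture (a test function built from $\ln\kappa_1$ and the angle function, the eigenvalue perturbation trick, Simons' identity, and absorption by the good terms $\sum_i S_n^{ii}$ and $\sum_i S_n^{ii}\kappa_i^2$) matches the paper's proof. However, the two steps where the actual difficulty lives are not handled correctly in your plan. First, you misidentify the role of the hypothesis $\sigma_n(\kappa)>-A$. It is not needed for the cubic (third-order) estimate; it is needed for a \emph{second-order} term. After contracting Simons' identity one picks up $+\bigl(\kappa_1+\tfrac{1}{\kappa_1}\bigr)\sum_i S_n^{ii}\kappa_i$, and by the Euler-type identity $\sum_i\kappa_i S_{n-1}(\kappa|i)=nS_n-\sigma_{n-1}$ this equals $\bigl(\kappa_1+\tfrac{1}{\kappa_1}\bigr)(n\sigma-\sigma_{n-1})$. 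On $\widetilde\Gamma_n$ the quantity $\sigma_{n-1}=\sigma-\alpha\sigma_n$ has no a priori upper bound, so $-\kappa_1\sigma_{n-1}$ could be superlinear in $\kappa_1$ and would destroy the absorption by $(N-1)S_n^{11}\kappa_1^2\gtrsim N\theta\sigma\kappa_1$; the lower bound $\sigma_n>-A$ gives $\sigma_{n-1}\le\sigma+\alpha A$ and hence $-\kappa_1\sigma_{n-1}\ge -C\kappa_1$, which is absorbable. Your proposal never accounts for this term.

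Second, the proposed decomposition $S_n^{ii,jj}=\sigma_{n-1}^{ii,jj}+\alpha\,\sigma_n^{ii,jj}$ with a separate Ren--Wang estimate for the $\sigma_{n-1}$ piece does not close. The Ren--Wang/Li--Ren-type inequalities produce an error term of the form $K\kappa_1\bigl(\sum_j f^{jj}\xi_j\bigr)^2$, and the critical-point/differentiated-equation identity only kills $\sum_j S_n^{jj}\nabla_1h_{jj}=\nabla_1 S_n=0$; the piece $\sum_j\sigma_{n-1}^{jj}\nabla_1h_{jj}=-\alpha\nabla_1\sigma_n$ does not vanish, so applying the $\sigma_{n-1}$ inequality "exactly as in Lu" leaves an uncontrolled quadratic third-order remainder. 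Moreover, the sign of $-\sum_{i\ne j}\sigma_n^{ii,jj}h_{ii,1}h_{jj,1}=-\sum_{i\ne j}\sigma_{n-2}(\kappa|ij)h_{ii,1}h_{jj,1}$ is not governed by the sign of $\sigma_n(\kappa)$, particularly on $\widetilde\Gamma_n\setminus\Gamma_n$, so the claim that this piece is "favorable when $\sigma_n\ge0$" is unjustified. The paper sidesteps both problems by invoking a single concavity inequality proved by Li--Ren for the operator $S_n$ itself on $\widetilde\Gamma_n$, namely
\begin{equation*}
\lambda_1\Bigl(K\bigl(\textstyle\sum_j S_n^{jj}\xi_j\bigr)^2-S_n^{pp,qq}\xi_p\xi_q\Bigr)-S_n^{11}\xi_1^2+(1+\epsilon)\textstyle\sum_{j>1}S_n^{jj}\xi_j^2\ge0,
\end{equation*}
applied with $\xi_j=\nabla_1h_{jj}$ so that the $K$-term vanishes identically. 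Without this (or an equivalent unified inequality for $S_n$), your cubic estimate has a genuine gap.
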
Note that when $\alpha=0$, problem \ref{problem(1.1)} for the case $f=\sigma_{n-1},\;K=\Gamma_{n-1}$ has been settled by Lu\cite{S. Lu}. Thus in this paper we assume $\alpha$ is a positive number. Theorem \ref{theo1} provides a curvature estimate for problem \ref{problem(1.1)} under the condition that the $\sigma_{n}(\kappa)$ is bounded below. As pointed out in  Guan and Spruck \cite{GS10}, we have the following conclusion:
\begin{theo}\label{theo2}
For $n\geq 2$, let $\Gamma=\partial\Omega$, where $\Omega$ is a bounded smooth domain in $\mathbb{R}^n$ with nonnegative mean curvature and $\sigma\in (0,n)$. If there exists a constant $A>0$ such that $\sigma_{n}(\kappa)>-A$, then there exists a complete hypersurface $\Sigma$ in $\mathbb{H}^{n+1}$ satisfying
\begin{align*}
S_{n}(\kappa)=\sigma,\quad \partial\Sigma=\Gamma.
\end{align*}
\end{theo}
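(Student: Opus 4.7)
The plan is to follow the Guan--Spruck framework \cite{GS10}: approximate the asymptotic problem by Dirichlet problems at finite height, solve each by the continuity method using Theorem \ref{theo1} as the key curvature estimate, and then let the height parameter tend to zero. Working in the upper half-space, I would seek $\Sigma$ as a vertical graph $x_{n+1}=u(x)$ over $\Omega$, so that \eqref{S_n} becomes a fully nonlinear elliptic equation $F[u]=\sigma$ on $\Omega$. For each small $\epsilon>0$, consider the auxiliary Dirichlet problem
\begin{equation*}
F[u^{\epsilon}]=\sigma \text{ in } \Omega, \qquad u^{\epsilon}=\epsilon \text{ on } \partial\Omega,
\end{equation*}
in the admissible class $\kappa[u^\epsilon]\in\widetilde{\Gamma}_n$ with $\sigma_n(\kappa)\geq -A$.

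To produce $u^\epsilon$ I would use the continuity method along a smooth path of right-hand sides connecting a simple model solution (for example a piece of an equidistant sphere whose principal curvatures are constant) to the target. Openness is standard, since the linearization of $F$ is uniformly elliptic on admissible solutions and the Fredholm alternative applies. For closedness one needs a priori estimates up to $C^{2,\alpha}$ that are uniform in the continuity parameter and in $\epsilon$ on compact subsets of $\Omega$: a $C^0$ bound coming from spherical barriers tangent to $\Gamma\subset\partial_\infty\mathbb{H}^{n+1}$; a boundary gradient estimate that exploits the nonnegative mean curvature of $\partial\Omega$ as in Guan--Spruck \cite{GS00, GS11} and Lu \cite{S. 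Lu}; boundary $C^2$ estimates along the lines of Guan--Spruck--Szapiel \cite{GSS} and Lu \cite{S. Lu}; and, crucially, the global curvature estimate Theorem \ref{theo1}, which reduces the interior $C^2$ bound to the boundary one precisely under the hypothesis $\sigma_n(\kappa)>-A$. Concavity of an appropriate power of $S_n$ on $\widetilde{\Gamma}_n$ (cf.\ \cite{LRW}) together with Evans--Krylov and Schauder bootstrap then yields $C^{4,\alpha}$ bounds and closes the continuity argument, producing a smooth admissible $u^\epsilon$ for every $\epsilon>0$.

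With $u^\epsilon$ in hand, I would let $\epsilon\to 0$. Because the above estimates are uniform on compact subsets of $\Omega$, a diagonal subsequence converges in $C^{4}_{\mathrm{loc}}(\Omega)$ to a smooth admissible solution $u$ of $F[u]=\sigma$ with $u\to 0$ on $\partial\Omega$; the corresponding graph $\Sigma$ is complete in the hyperbolic metric and has asymptotic boundary $\Gamma$.

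The main obstacles I anticipate are twofold. First, the boundary $C^2$-estimate must be adapted to the mixed operator $S_n=\sigma_{n-1}+\alpha\sigma_n$ on the cone $\widetilde{\Gamma}_n$: the standard $\sigma_k$-arguments require modification to accommodate the extra $\alpha\sigma_n$ term and the possibility that $\sigma_n(\kappa)<0$, which affects the choice of test functions and the admissibility of the tangential-normal and double-normal estimates. Second, and more delicately, the lower bound $\sigma_n(\kappa)>-A$ must be propagated along the continuity path and through the limit $\epsilon\to 0$, so that Theorem \ref{theo1} is genuinely available at each stage; this is precisely why the hypothesis on $\sigma_n$ is imposed in the statement of Theorem \ref{theo2}, and its verification at the end-points of the continuity path is where most of the structural work of the approach lies.
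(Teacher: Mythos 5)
Your proposal is correct and follows essentially the same route the paper intends: the paper gives no independent argument for Theorem \ref{theo2}, simply asserting that it follows from the curvature estimate of Theorem \ref{theo1} via the Guan--Spruck framework of \cite{GS10}, which is exactly the approximation-by-Dirichlet-problems, continuity-method, and $\epsilon\to 0$ scheme you describe. If anything, your write-up is more careful than the paper, since the two obstacles you flag (adapting the boundary $C^2$ estimates to $S_n$ on $\widetilde{\Gamma}_n$ and propagating the lower bound $\sigma_n(\kappa)>-A$ along the continuity path and through the limit) are genuine issues that the paper does not address at all.
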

\subsection{Outline of proof}
Regarding the proof of Theorem \ref{theo1}, We select a new test function proposed by Lu \cite{S. Lu} and apply the maximum principle. To bound $\kappa_{1}$, we exploit the maximum point of the test function, reducing its second-order differential inequalities through cancellation of: 
    \begin{enumerate}
    \item fourth-order terms,
    \item negative third-order terms, 
    \item some other negative second-order terms.
\end{enumerate}
\par For fourth-order terms, we can be handled directly via the equation itself without additional techniques. \par The more challenging part is the handling of the negative third-order terms, which is highly dependent on the structure of the sum Hessian operator $S_{n}$. In this paper, we employ the concavity inequality established by Li-Ren \cite{PR} to effectively address the bad third-order terms.
\par Due to the special structure of $S_{n}$, very bad second-order terms $-\kappa_{1}\sigma_{n-1}$ arise in our proof. To overcome this difficulty, we impose the condition that $\sigma_{n}$ has a lower bound. However, the problem \ref{problem(1.1)} for the case $f=S_{n},\;K=\widetilde{\Gamma}_{n}$ without imposing any additional conditions remains open. Furthermore, we can also consider the problem \ref{problem(1.1)} for the case $f=S_{k},\;K=\widetilde{\Gamma}_{k}$ with $2\leq k \leq n-1$. \par As pointed out in Guan and Spruck \cite{GS10}, based on the curvature estimate obtained in Theorem \ref{theo1}, we can directly derive Theorem \ref{theo2}.
\section{Preliminaries}
\par In this section, we compile the fundamental formulas for hypersurfaces in hyperbolic space along with key properties of the operator $S_{n}$, see \cite{GS10,PR}.
\subsection{Properties of sum Hessian $S_{n}$}
\begin{lemm}\label{lem2.1}
		Let $\lambda=(\lambda_1,\cdots,\lambda_n)\in
		\mathbb{R}^n$, we have\\
		\par
		(i) $S_n^{ii}(\lambda):=\dfrac{\partial
			S_n(\lambda)}{\partial\lambda_i}=S_{n-1}(\lambda|i)$,
		\quad $i=1,2,\cdots,n$;
		\par
		(ii)
		$S_n^{ii,jj}(\lambda):=\dfrac{\partial^2S_n(\lambda)}{\partial\lambda_i\partial\lambda_j}=S_{n-2}(\lambda|ij)$,
		\quad $i,j=1,2,\cdots,n$, \quad and $S_k^{ii,ii}(\lambda)=0$;
		\par
		(iii) $S_n(\lambda)=\lambda_iS_{n-1}(\lambda|i)+S_n(\lambda|i),$
		$i=1,\cdots,n$;
		\par
		(iv)
		$\dsum_{i=1}^nS_n(\lambda|i)=\sigma_{n-1}(\lambda);$
		\par
		(v)
		$\dsum_{i=1}^n\lambda_iS_{n-1}(\lambda|i)=nS_n(\lambda)-\sigma_{n-1}(\lambda).$
	\end{lemm}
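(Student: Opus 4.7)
The plan is to reduce each of (i)--(v) to the well-known analogous identity for the ordinary elementary symmetric polynomial $\sigma_k$, since $S_n=\sigma_{n-1}+\alpha\sigma_n$ is simply a linear combination and derivatives, restrictions, and sums all distribute over it. To set the stage I would first recall three standard facts about $\sigma_k$, each of which is a direct consequence of the definition $\sigma_k(\lambda)=\sum_{i_1<\cdots<i_k}\lambda_{i_1}\cdots\lambda_{i_k}$:
\begin{equation*}
\frac{\partial\sigma_k}{\partial\lambda_i}(\lambda)=\sigma_{k-1}(\lambda|i), \qquad \sigma_k(\lambda)=\lambda_i\sigma_{k-1}(\lambda|i)+\sigma_k(\lambda|i), \qquad \sum_{i=1}^n\sigma_k(\lambda|i)=(n-k)\sigma_k(\lambda),
\end{equation*}
together with the degenerate case $\sigma_n(\lambda|i)=0$ (one cannot pick $n$ indices out of $n-1$). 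I would also pin down the notational convention that $S_{n-1}$, applied to a point $\mu\in\mathbb{R}^{n-1}$, means $\sigma_{n-2}(\mu)+\alpha\sigma_{n-1}(\mu)$, in keeping with the pattern $S_m=\sigma_{m-1}+\alpha\sigma_m$.

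With these in place, (i) and (ii) are immediate from differentiating $S_n=\sigma_{n-1}+\alpha\sigma_n$ term by term:
\begin{equation*}
\frac{\partial S_n}{\partial\lambda_i}=\sigma_{n-2}(\lambda|i)+\alpha\sigma_{n-1}(\lambda|i)=S_{n-1}(\lambda|i),
\end{equation*}
and differentiating once more gives $S_n^{ii,jj}=\sigma_{n-3}(\lambda|ij)+\alpha\sigma_{n-2}(\lambda|ij)=S_{n-2}(\lambda|ij)$; the second derivative in the same variable vanishes because each $\sigma_k$ is affine in $\lambda_i$. For (iii) I would apply the expansion identity for $\sigma_k$ twice and regroup,
\begin{equation*}
S_n(\lambda)=\lambda_i\sigma_{n-2}(\lambda|i)+\sigma_{n-1}(\lambda|i)+\alpha\bigl[\lambda_i\sigma_{n-1}(\lambda|i)+\sigma_n(\lambda|i)\bigr]=\lambda_iS_{n-1}(\lambda|i)+S_n(\lambda|i).
\end{equation*}

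For (iv), summing $S_n(\lambda|i)=\sigma_{n-1}(\lambda|i)+\alpha\sigma_n(\lambda|i)$ over $i$ and using $\sum_i\sigma_{n-1}(\lambda|i)=(n-(n-1))\sigma_{n-1}(\lambda)=\sigma_{n-1}(\lambda)$ together with $\sigma_n(\lambda|i)=0$ gives $\sum_{i=1}^n S_n(\lambda|i)=\sigma_{n-1}(\lambda)$. Finally, (v) is obtained by summing identity (iii) over $i$ and substituting (iv): $nS_n(\lambda)=\sum_i\lambda_iS_{n-1}(\lambda|i)+\sigma_{n-1}(\lambda)$, whence $\sum_i\lambda_iS_{n-1}(\lambda|i)=nS_n(\lambda)-\sigma_{n-1}(\lambda)$.

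The whole argument is routine symbolic manipulation, so there is no genuine obstacle; the only place where I would be careful is the bookkeeping for the restricted operator $S_{n-1}$, making sure that its definition on $n-1$ variables is consistent with the global pattern $S_m=\sigma_{m-1}+\alpha\sigma_m$, and keeping track that $\sigma_n(\lambda|i)=0$ so that the $\alpha$-contribution to (iv) drops out.
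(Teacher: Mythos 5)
Your proof is correct, and since the paper states Lemma \ref{lem2.1} without proof (deferring to the cited references), your argument of reducing each identity to the standard facts about $\sigma_k$ via the linear decomposition $S_n=\sigma_{n-1}+\alpha\sigma_n$ is exactly the expected one. The one convention you flag — that $S_m(\mu)=\sigma_{m-1}(\mu)+\alpha\sigma_m(\mu)$ even when $\mu$ has fewer than $m$ entries, so that $\sigma_n(\lambda|i)=0$ kills the $\alpha$-term in (iv) — is indeed the right reading and is handled consistently.
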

\begin{lemm}\label{lem2.2}	For $\lambda=(\lambda_1,\cdots,\lambda_n)\in\widetilde{\Gamma}_n$, $\lambda_1\ge\cdots\ge\lambda_n.$
		If $i=1,2,\cdots,n-1$, then exist a positive constant $\theta$ depending on $n$,  such that
		$$S_n^{ii}\ge\frac{\theta S_n(\lambda)}{\lambda_i}.$$
	\end{lemm}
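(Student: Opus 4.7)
The plan is to extract from Lemma~\ref{lem2.1}(i) and (iii) the identity
\begin{equation*}
\lambda_i S_n^{ii}(\lambda) \;=\; S_n(\lambda) - \sigma_{n-1}(\lambda|i),
\end{equation*}
where the passage from $S_n(\lambda|i)$ to $\sigma_{n-1}(\lambda|i)$ uses that $\sigma_n(\lambda|i) = 0$, the $n$-th elementary symmetric polynomial vanishing on a tuple of $n-1$ entries. With this identity in hand, the lemma reduces to the one-sided bound
\begin{equation*}
\sigma_{n-1}(\lambda|i) \;\le\; (1-\theta)\, S_n(\lambda) \quad \text{for some } \theta=\theta(n)>0,
\end{equation*}
and the problem becomes a one-sided control of $\sigma_{n-1}(\lambda|i)$ by $S_n(\lambda)$.

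To obtain this bound I would first record the standard fact that $\lambda\in\Gamma_{n-1}$ with $\lambda_1 \ge \cdots \ge \lambda_n$ forces $\lambda_1, \ldots, \lambda_{n-1} > 0$, and then split on the sign of $\lambda_n$. If $\lambda_n \le 0$, then $\sigma_{n-1}(\lambda|i) = \lambda_n \prod_{k \ne i,n}\lambda_k \le 0$ for $i \le n-1$, and the bound is immediate from $S_n(\lambda) > 0$. If instead $\lambda_n > 0$, then every $\lambda_j$ is positive, $\sigma_{n-1}(\lambda|j) = \sigma_n(\lambda)/\lambda_j > 0$ for each $j$, and since $\lambda_n$ is the smallest coordinate, $\sigma_{n-1}(\lambda|n) \ge \sigma_{n-1}(\lambda|i)$. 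Applying Lemma~\ref{lem2.1}(iv) then yields
\begin{equation*}
\sigma_{n-1}(\lambda) \;=\; \sum_{j=1}^{n} \sigma_{n-1}(\lambda|j) \;\ge\; \sigma_{n-1}(\lambda|i) + \sigma_{n-1}(\lambda|n) \;\ge\; 2\sigma_{n-1}(\lambda|i),
\end{equation*}
and $\sigma_{n-1}(\lambda) \le S_n(\lambda)$ since $\alpha \ge 0$ and $\sigma_n(\lambda) > 0$ in this case, so $\sigma_{n-1}(\lambda|i) \le \tfrac{1}{2}S_n(\lambda)$. Thus $\theta = 1/2$ works in both cases.

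The argument is essentially Newton's identity together with a pigeonhole observation driven by the descending ordering, so I do not anticipate a serious obstacle. The only point requiring some care is the auxiliary positivity $\lambda_1,\ldots,\lambda_{n-1} > 0$, which is what makes the inequality $S_n^{ii}(\lambda) \ge \theta S_n(\lambda)/\lambda_i$ meaningful in the first place and what enables the sign analysis above; I would cite this as a classical fact about $\Gamma_{n-1}$ under the ordering convention rather than reprove it.
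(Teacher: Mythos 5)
Your proof is correct. The paper states Lemma~\ref{lem2.2} without proof, citing the sum-Hessian references, so there is no in-paper argument to compare against line by line; but the route you take is the natural one and is complete: the identity $\lambda_i S_n^{ii}(\lambda)=S_n(\lambda)-\sigma_{n-1}(\lambda|i)$ from Lemma~\ref{lem2.1}(i),(iii) together with $\sigma_n(\lambda|i)=0$, the classical fact that $\lambda_1,\dots,\lambda_{n-1}>0$ on $\Gamma_{n-1}$ under the descending ordering, the sign split on $\lambda_n$, and the pigeonhole in $\sigma_{n-1}(\lambda)=\sum_j\sigma_{n-1}(\lambda|j)$ correctly yield $\theta=1/2$ uniformly in $n$, which more than satisfies the stated dependence.
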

The following concavity inequality by Li-Ren\cite{PR} plays a crucial role in the proof of our main theorem.
\begin{lemm}\label{lem2.3}
	If $\lambda\in\widetilde{
		\Gamma}_n$, then $\forall \epsilon>0$, $\exists K(\epsilon)$, such that
	\begin{equation}\label{y1}
		\lambda_1\left(K\left(\sum\limits_jS_n^{jj}(\lambda)\xi_j\right)^2-S_n^{pp,qq}(\lambda)\xi_p\xi_q\right)-S_n^{11}(\lambda)\xi_1^2+(1+\epsilon)\sum\limits_{j> 1}S_n^{jj}\xi_j^2\ge0.
	\end{equation}
\end{lemm}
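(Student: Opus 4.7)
The inequality is a sum-Hessian analogue of the Ren-Wang type concavity bounds for $\sigma_k$, adapted to $S_n = \sigma_{n-1} + \alpha\sigma_n$. Working without loss of generality in the ordering $\lambda_1 \geq \lambda_2 \geq \cdots \geq \lambda_n$, the plan is to decompose the quadratic form $-S_n^{pp,qq}\xi_p\xi_q$ and absorb its negative parts either into the slack term $(1+\epsilon)\sum_{j>1}S_n^{jj}\xi_j^2$ or into the large multiplier term $K\lambda_1\bigl(\sum_j S_n^{jj}\xi_j\bigr)^2$.

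First I would expand, using $S_k^{ii,ii}=0$,
\begin{equation*}
S_n^{pp,qq}(\lambda)=\sigma_{n-3}(\lambda|pq)+\alpha\sigma_{n-2}(\lambda|pq),\quad p\neq q,
\end{equation*}
and split
\begin{equation*}
-S_n^{pp,qq}\xi_p\xi_q=-2\sum_{1<q}S_n^{11,qq}\xi_1\xi_q\,-\,2\sum_{1<p<q}S_n^{pp,qq}\xi_p\xi_q.
\end{equation*}
For the double sum with $p,q\geq 2$, Cauchy-Schwarz $2|\xi_p\xi_q|\leq \xi_p^2+\xi_q^2$ reduces everything to the pointwise estimate $\lambda_1|S_n^{pp,qq}|\leq (1+\epsilon)\,S_n^{qq}$, and symmetrically with $S_n^{pp}$, which would allow absorption into the slack term. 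The key identity
\begin{equation*}
\lambda_1\sigma_{k-1}(\lambda|1q)=\sigma_k(\lambda|q)-\sigma_k(\lambda|1q),
\end{equation*}
applied with $k=n-1$ and $k=n-2$, provides the structural bound once one checks that in $\widetilde{\Gamma}_n$ the trailing pieces $\sigma_k(\lambda|1q)$ have a sign compatible with the inequality; here one exploits convexity of $\widetilde{\Gamma}_n$ and the ellipticity $S_n^{ii}>0$ guaranteed by Lemma \ref{lem2.2}.

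Second I would handle the $p=1$ cross terms $-2\lambda_1 S_n^{11,qq}\xi_1\xi_q$ together with the isolated $-S_n^{11}\xi_1^2$ piece. Expanding
\begin{equation*}
K\lambda_1\Big(\sum_j S_n^{jj}\xi_j\Big)^2=K\lambda_1(S_n^{11})^2\xi_1^2+2K\lambda_1\sum_{q>1}S_n^{11}S_n^{qq}\xi_1\xi_q+K\lambda_1\Big(\sum_{q>1}S_n^{qq}\xi_q\Big)^2,
\end{equation*}
and choosing $K=K(\epsilon)$ large enough, the first piece dominates $S_n^{11}\xi_1^2$ (using $\lambda_1S_n^{11}\geq c>0$ on admissible configurations via Lemma \ref{lem2.1}(iii) and $S_n>0$), while the cross piece, combined with Cauchy-Schwarz against the slack $(1+\epsilon)\sum_{j>1}S_n^{jj}\xi_j^2$, absorbs $-2\lambda_1 S_n^{11,qq}\xi_1\xi_q$.

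The main obstacle is the pointwise estimate $\lambda_1|S_n^{pp,qq}|\leq (1+\epsilon) S_n^{qq}$ for $2\leq p<q$: because $S_n$ mixes two elementary symmetric functions of different orders, the scales of $\sigma_{n-3}(\lambda|pq)$ and $\sigma_{n-2}(\lambda|pq)$ must both be controlled simultaneously by $S_n^{qq}=\sigma_{n-2}(\lambda|q)+\alpha\sigma_{n-1}(\lambda|q)$, and one must ensure the constants do not degenerate when $\lambda_n$ is negative. This forces the delicate bookkeeping of which terms are absorbed by the $\alpha\sigma_n$ part versus the $\sigma_{n-1}$ part of $S_n^{qq}$, and is where the full admissibility $\lambda\in\widetilde{\Gamma}_n$ (rather than $\Gamma_n$) is used essentially.
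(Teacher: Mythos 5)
First, note that the paper itself offers no proof of Lemma \ref{lem2.3}: it is quoted verbatim from Li--Ren \cite{PR}, so there is no in-paper argument to compare against; I can only judge your sketch on its own terms. There is a genuine gap at its core. You reduce the off-diagonal block with $2\le p<q$ to the pointwise estimate $\lambda_1|S_n^{pp,qq}|\le(1+\epsilon)S_n^{qq}$ via $2|\xi_p\xi_q|\le\xi_p^2+\xi_q^2$, but this estimate is false on $\widetilde{\Gamma}_n$. Take $n=3$, $\alpha=1$, $\lambda=(t,1,0)$ with $t$ large (admissible: $\sigma_1,\sigma_2>0$ and $S_3=\sigma_2+\sigma_3=t>0$). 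Then $S_3^{22,33}=1+\alpha\lambda_1=1+t$, so $\lambda_1 S_3^{22,33}\sim t^2$, while $S_3^{33}=\lambda_1+\lambda_2+\alpha\lambda_1\lambda_2\sim 2t$; the ratio blows up. Consequently, with $\xi_2=\xi_3$ the block $-2\lambda_1S_3^{22,33}\xi_2\xi_3+(1+\epsilon)(S_3^{22}\xi_2^2+S_3^{33}\xi_3^2)$ is negative of order $t^2$, so the $p,q\ge2$ cross terms simply cannot be absorbed into the slack term in isolation. In this example the full inequality still holds, but only because on the critical hyperplane $\sum_jS_n^{jj}\xi_j=0$ the forced value of $\xi_1$ makes the $p=1$ cross terms $-2\lambda_1S_n^{11,qq}\xi_1\xi_q$ large and positive, cancelling the bad block. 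This global cancellation across the whole quadratic form, rather than block-by-block absorption, is precisely the hard content of Ren--Wang/Li--Ren concavity inequalities, and your scheme does not capture it.

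A secondary weak point: to dominate $-S_n^{11}\xi_1^2$ by $K\lambda_1(S_n^{11})^2\xi_1^2$ you need $K\lambda_1S_n^{11}\ge 1$ with $K=K(\epsilon)$ uniform over $\widetilde{\Gamma}_n$. Lemma \ref{lem2.2} only gives $\lambda_1S_n^{11}\ge\theta S_n(\lambda)$, and $S_n(\lambda)$ can be arbitrarily small on $\widetilde{\Gamma}_n$ (the lemma is stated for the cone, not for the level set $S_n=\sigma$), so the constant $c$ you invoke does not exist uniformly. Since $S_n$ is inhomogeneous, this cannot be fixed by rescaling; the resolution in \cite{PR} again goes through the joint structure of the $K$-term, the $\xi_1^2$ term, and the cross terms rather than through separate pointwise bounds.
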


\subsection{Hypersurfaces in the hyperbolic space $\mathbb{H}^{n+1}$}
For computational convenience, we adopt the upper half-space model of hyperbolic space in this paper. More precisely, \begin{equation*}
    \mathbb{H}^{n+1}=\{(x,x_{n+1})\in \mathbb{R}^{n+1}:x_{n+1}>0\}
\end{equation*} equipped with the hyperbolic metric \begin{equation*}
    ds^{2}=\frac{1}{x_{n+1}^{2}}\sum_{i=1}^{n+1}dx_{i}^{2}.
\end{equation*}The boundary at infinity of $(n+1)$-dimensional hyperbolic space, $\partial_\infty\mathbb{H}^{n+1}$, is canonically identified with $\mathbb{R}^n$, where $\mathbb{R}^n$ is embedded in $\mathbb{R}^{n+1}$ as the subspace $\mathbb{R}^n \times {0}$. We consider a connected, orientable, complete hypersurface $\Sigma$ embedded in $\mathbb{H}^{n+1}$ whose asymptotic boundary at infinity is compact. Denote by $\mathbf{n}$ the unit normal vector field along $\Sigma$ oriented toward the non-compact component of $\mathbb{R}^{n+1}_+\setminus \Sigma$. Let $g$ be the hyperbolic metric on $\Sigma$ and $\nabla$ be the Levi-Civita connection with respect to $g$. On the other hand, we may regard $\Sigma$ as a hypersurface in $\mathbb{R}^{n+1}$. Let $\tilde{g}$ be the induced metric on $\Sigma$ from $\mathbb{R}^{n+1}$ and $\tilde{\nabla}$ be the Levi-Civita connection with respect to $\tilde{g}$. $\overline{\nabla}$ denotes the Levi-Civita connection of $\mathbb{H}^{n+1}$. \par Now we regard $\Sigma$ as a hypersurface in $\mathbb{R}^{n+1}$. We suppose $\Sigma$ is the vertical graph of a function $x_{n+1}=u(x_{1},\cdots,x_{n})$. Let $X$ and $\nu$ denote the position vector and the outer unit normal vector of $\Sigma$ in $\mathbb{R}^{n+1}$, respectively. We define \begin{equation*}
    u=X\cdot \mathbf{e}, \quad \nu^{n+1}=\nu\cdot\mathbf{e},
\end{equation*}where $\mathbf{e}$ is the unit vector field oriented in the positive $x_{n+1}$ direction within $\mathbb{R}^{n+1}$, with $\cdot$ designating the Euclidean inner product in $\mathbb{R}^{n+1}$. Viewing $\Sigma$ as a submanifold of $\mathbb{H}^{n+1}$. The first fundamental form  and second fundamental form on $\Sigma$ are then given by \begin{equation*}
    \begin{split}
        g_{ij}=\langle X_{i},X_{j} \rangle&=\langle \frac{\partial}{\partial x_{i}}+u_{i}\frac{\partial}{\partial x_{n+1}} ,\frac{\partial}{\partial x_{j}}+u_{j}\frac{\partial}{\partial x_{n+1}}\rangle\\
        &=\frac{1}{u^{2}}\left(\delta_{ij}+u_{i}u_{j}\right)=\frac{\tilde{g}_{ij}}{u^{2}}.
    \end{split}
\end{equation*}\begin{equation*}
    h_{ij}=\langle \overline{\nabla} _{X_{i}}X_{j}, \mathbf{n}\rangle=\frac{\tilde{h}_{ij}}{u}+\frac{\nu^{n+1}}{u^{2}}\tilde{g}_{ij}.
\end{equation*}The Gauss and Codazzi equations are given by
\begin{align*}
R_{ijkl}=-(\delta_{ik}\delta_{jl}-\delta_{il}\delta_{jk})+h_{ik}h_{jl}-h_{il}h_{jk}, 
\end{align*}
\begin{align*}
\nabla_{k}h_{ij}=\nabla_{j}h_{ik}.
\end{align*}The Ricci identity is given by \begin{align}\label{Ricci}
\nabla_{ij}h_{kl}=&\ \nabla_{kl}h_{ij}-h_{ml}(h_{im}h_{kj}-h_{ij}h_{mk})-h_{mj}(h_{mi}h_{kl}-h_{il}h_{mk})\\ \nonumber
&-h_{ml}(\delta_{ij}\delta_{km}-\delta_{ik}\delta_{jm}) -h_{mj}(\delta_{il}\delta_{km}-\delta_{ik}\delta_{lm}).
\end{align}\par The following lemma is repeatedly employed in the proof of the main theorem.
\begin{lemm}In a local orthonormal frame $e_{1},\cdots,e_{n}$, we have that\label{L1}
\begin{align*}
    C\leq \nu^{n+1}\leq 1, \quad where\;\nu^{n+1}=\frac{1}{\sqrt{1+|Du|^{2}}}
\end{align*}
\begin{align*}
\sum_i \frac{u_i^2}{u^2}=1-(\nu^{n+1})^2\leq 1,\quad  \nabla_{i}\nu^{n+1}=\frac{u_i}{u}(\nu^{n+1}-\kappa_i),
\end{align*}
\begin{align*}
\sum_i S_{n}^{ii}\nabla_{ii}\nu^{n+1} =&\ 2\sum_i S_{n}^{ii}\frac{u_i}{u} \nabla_{i}\nu^{n+1}+(n\sigma-\sigma_{n-1}) (1+(\nu^{n+1})^2)\\
&-\nu^{n+1}\left(\sum_i S_{n}^{ii}+\sum_i S_{n}^{ii}\kappa_i^2\right).
\end{align*}
\end{lemm}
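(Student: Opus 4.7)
The plan is to verify each statement by direct computation in the upper half-space model, using the formulas for the induced metric and the second fundamental form already recorded in this section, together with the Gauss, Codazzi, and Ricci identities. The inequality $C\le\nu^{n+1}\le 1$ is immediate from $\nu^{n+1}=(1+|Du|^{2})^{-1/2}$: the upper bound is obvious, and the lower bound is a standing a priori gradient estimate for vertical graphs solving $S_{n}(\kappa)=\sigma$ (cf.\ Guan--Spruck \cite{GS10}), which this paper inherits.

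For the two identities in the second display I would work in the hyperbolic orthonormal frame $e_{1},\dots,e_{n}$ on $\Sigma$ and take $u_{i}=e_{i}(u)$. The relation $\sum_{i}u_{i}^{2}/u^{2}=1-(\nu^{n+1})^{2}$ follows by computing the hyperbolic gradient norm of $u$: from $g^{ij}=u^{2}\tilde g^{ij}$ and $\tilde g^{ij}=\delta^{ij}-u_{x_{i}}u_{x_{j}}/(1+|Du|^{2})$ one obtains $|\nabla^{\mathbb{H}} u|^{2}=u^{2}|Du|^{2}/(1+|Du|^{2})=u^{2}\bigl(1-(\nu^{n+1})^{2}\bigr)$, which is the claim. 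For $\nabla_{i}\nu^{n+1}$, I would differentiate $\nu\cdot\mathbf{e}$ along $e_{i}$ via the ambient Euclidean Weingarten formula $\bar D_{e_{i}}\nu=-\tilde h_{ik}\tilde g^{kl}\tilde X_{l}$ together with $\tilde X_{l}\cdot\mathbf{e}=u_{l}$, then convert the Euclidean shape operator to the hyperbolic one using $h_{ij}=\tilde h_{ij}/u+(\nu^{n+1}/u^{2})\tilde g_{ij}$. In the orthonormal frame where $h_{ij}=\kappa_{i}\delta_{ij}$ the expression collapses to $(u_{i}/u)(\nu^{n+1}-\kappa_{i})$.

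The main work is the weighted-Laplacian formula. My plan is to differentiate the first-derivative identity $\nabla_{i}\nu^{n+1}=(u_{i}/u)(\nu^{n+1}-\kappa_{i})$ once more along $e_{i}$, diagonalising $(h_{ij})$ at the point of interest, and contract with $S_{n}^{ii}$. The product rule produces four groups of terms: (a) a Hessian piece involving $\nabla_{ii}u$, which I would evaluate using the Gauss formula applied to the restriction of the ambient coordinate $x_{n+1}$ to $\Sigma$ (in the half-space model $\bar\nabla^{2}x_{n+1}$ has an explicit diagonal form, and the tangential Hessian picks up the standard $h_{ij}\langle\bar\nabla x_{n+1},\mathbf{n}\rangle$ correction); (b) contributions proportional to $u_{i}^{2}/u^{2}$, which are absorbed by the identity $\sum u_{i}^{2}/u^{2}=1-(\nu^{n+1})^{2}$ just obtained; (c) the $(u_{i}/u)\nabla_{i}\nu^{n+1}$ pieces, which directly reproduce the drift term $2\sum S_{n}^{ii}(u_{i}/u)\nabla_{i}\nu^{n+1}$ on the right-hand side; and (d) terms involving $\nabla_{i}\kappa_{i}$, which are handled via the Codazzi identity combined with the differentiated equation $\sum_{i}S_{n}^{ii}\nabla_{k}h_{ii}=0$. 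Finally, the combinatorial identity $\sum\kappa_{i}S_{n}^{ii}=nS_{n}-\sigma_{n-1}=n\sigma-\sigma_{n-1}$ from Lemma \ref{lem2.1}(v) delivers the coefficient $(n\sigma-\sigma_{n-1})(1+(\nu^{n+1})^{2})$, while the quadratic piece $-\nu^{n+1}\sum S_{n}^{ii}\kappa_{i}^{2}$ and the zeroth-order piece $-\nu^{n+1}\sum S_{n}^{ii}$ fall out from the $\kappa_{i}^{2}$-level and $\kappa_{i}$-level contractions respectively.

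The main obstacle I anticipate is the careful bookkeeping across the three distinct connections $\bar D$ (Euclidean ambient), $\tilde\nabla$ (induced Euclidean), and $\nabla$ (hyperbolic) that differ by explicit conformal-factor Christoffel terms in the half-space model; every cross-term must be tracked accurately so that the final $(1+(\nu^{n+1})^{2})$ coefficient and the combination $\sum S_{n}^{ii}+\sum S_{n}^{ii}\kappa_{i}^{2}$ assemble without spurious residuals. A useful sanity check, once the full formula is assembled, is to reduce to the classical constant-mean-curvature identity of Nelli--Spruck \cite{NS} in the obvious limiting regime.
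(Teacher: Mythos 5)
Your proposal is correct and is essentially the computation underlying the paper's proof; the only difference is that the paper outsources almost all of it to the literature, citing Proposition 4.1 of \cite{GS10} for the bound $C\le\nu^{n+1}$ and equation (3.9) of \cite{BJL} for both the first- and second-derivative identities, so that its sole substantive step is substituting $\sum_i S_n^{ii}\kappa_i=nS_n-\sigma_{n-1}=n\sigma-\sigma_{n-1}$ from Lemma \ref{lem2.1}(v) to produce the coefficient $(n\sigma-\sigma_{n-1})(1+(\nu^{n+1})^2)$ --- exactly the step you identify. Your planned derivation of the weighted-Laplacian formula (Gauss/Weingarten for the height function, Codazzi plus the differentiated equation $\sum_i S_n^{ii}\nabla_k h_{ii}=0$ to remove third-order terms) is the standard route by which (3.9) of \cite{BJL} is itself obtained, so nothing is gained or lost beyond self-containedness; just note that the lower bound on $\nu^{n+1}$ is genuinely a global gradient estimate inherited from \cite{GS10} and not a pointwise consequence of the formula for $\nu^{n+1}$, which you correctly acknowledge.
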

\begin{proof}
   The first inequality can be found in proposition 4.1 of \cite{GS10}. The second equality follows immediately from definition of $\nu^{n+1}$, while the third one is given by equation (3.9) in \cite{BJL}. For the fouth equality, by combining (3.9) in \cite{BJL} with Lemma \ref{lem2.1} (v), we obtain\begin{align*}
\sum_i S_{n}^{ii}\nabla_{ii}\nu^{n+1} &=\ 2\sum_i S_{n}^{ii}\frac{u_i}{u} \nabla_{i}\nu^{n+1}+(1+(\nu^{n+1})^2)\sum_{i}S_{n}^{ii}\kappa_{i}\\
&\;\;\;-\nu^{n+1}\left(\sum_i S_{n}^{ii}+\sum_i S_{n}^{ii}\kappa_i^2\right)\\
&=\ 2\sum_i S_{n}^{ii}\frac{u_i}{u} \nabla_{i}\nu^{n+1}+(n\sigma-\sigma_{n-1}) (1+(\nu^{n+1})^2)\\
&\;\;\;-\nu^{n+1}\left(\sum_i S_{n}^{ii}+\sum_i S_{n}^{ii}\kappa_i^2\right).
\end{align*}
\end{proof}
\section{Proof of main theorems}
\par In this section, we present the proof of Theorem \ref{theo1}, from which we obtain Theorem \ref{theo2}.

\textit{Proof of Theorem \ref{theo1}.}
\begin{proof}

    We consider the test function proposed by Lu \cite{S. Lu} 
    \begin{align*}
Q=\ln \kappa_1-N\ln \nu^{n+1},
\end{align*}
where $\kappa_1$ is the largest principle curvature and $N$ is a large constant to be determined later.\par Suppose that $Q$ attains its maximum at an interior point $X_{0}$ of $\Sigma$. Should the principal curvature $\kappa_1$ possess multiplicity greater than $1$, the function $Q$ fails to remain smooth at $X_0$. Consequently, we employ the curvature perturbation technique(cf. \cite{J.Chu}) to resolve this issue. We select a local orthonormal frame ${e_1,\dots,e_n}$ near $X_0$ satisfying the following conditions at $X_0$:
\begin{align*}
\nabla_{e_i}e_j = 0, \quad h_{ij} = \delta_{ij}\kappa_i, \quad \text{with} \quad \kappa_1 \geq \cdots \geq \kappa_n
\end{align*}
In a neighborhood of $X_0$, we introduce a new tensor $B$ defined for tangent vectors $V_{1},V_{2}$ by
\begin{align*}
B(V_1,V_2)=g(V_1,V_2)-g(V_1,e_1)g(V_2,e_1),
\end{align*}Denote $B_{ij}=B(e_{i},e_{j})$, we construct the modified second fundamental form
\begin{align*}
\tilde{h}_{ij}=h_{ij}-B_{ij}.
\end{align*}Let $\tilde{\kappa}_{1}\geq \cdots \geq \tilde{\kappa}_{n}$ denote the eigenvalues of $\tilde{h}_{ij}$. Near $X_{0}$, we have $\kappa_1\geq \tilde{\kappa}_1$ and at $X_{0}$, the following holds: \begin{align*}
\tilde{\kappa}_i=\begin{cases}
\kappa_1,\quad &i=1,\\
\kappa_i-1, & i>1.
\end{cases}
\end{align*}

We now replace $\kappa_{1}$ with $\tilde{\kappa}_{1}$(smooth at $X_{0}$) in function $Q$ and consider the new test function
\begin{align*}
\tilde{Q}=\ln\tilde{\kappa}_1-N\ln \nu^{n+1}.
\end{align*}Note that $\tilde{Q}$ also attains its maximum at $X_{0}$. Therefore, by the maximum principle, at $X_{0}$, we have
\begin{align}\label{(3.1)}
0=\nabla_{i}\tilde{Q}=\frac{\nabla_{i}\tilde{\kappa}_1}{\tilde{\kappa}_1}-N\frac{\nabla_{i}\nu^{n+1}}{\nu^{n+1}},
\end{align}
\begin{align}\label{(3.2)}
0\geq \nabla_{ii}\tilde{Q}=\frac{\nabla_{ii}\tilde{\kappa}_1}{\tilde{\kappa}_1}-\frac{(\nabla_{i}\tilde{\kappa}_{1})^{2}}{\tilde{\kappa}_1^2}-N\frac{\nabla_{ii}\nu^{n+1}}{\nu^{n+1}}+N\frac{(\nabla_{i}\nu^{n+1})^2}{(\nu^{n+1})^2}.
\end{align}\par We first address the fourth-order terms. By the definition of tensor $B$ and $\nabla_{e_{i}}e_{j}=0$, we have 
\begin{equation*}
    \nabla_{p}B_{ij}=0,\quad \nabla_{ii}B_{11}=0\;\;\text{at}\;X_{0}.
\end{equation*} It follows that\begin{equation*}
    \nabla_{p}\tilde{h}_{ij}=\nabla_{p}h_{ij},\quad\nabla_{ii}\tilde{h}_{11}=\nabla_{ii}h_{11}.
\end{equation*}
Direct calculation shows that \begin{equation*}
    \begin{split}
        &\nabla_{i}\tilde{\kappa}_{1}=\frac{\partial \tilde{\kappa}_{1}}{\partial \tilde{h}_{pq}}\nabla_{i}\tilde{h}_{pq}=\nabla_{i}\tilde{h}_{11}=\nabla_{i}h_{11}\\
        &\nabla_{ii}\tilde{\kappa}_{1}=\frac{\partial \tilde{\kappa}_{1}}{\partial \tilde{h}_{pq}}\nabla_{ii}\tilde{h}_{pq}+\frac{\partial ^{2} \tilde{\kappa}_{1}}{\partial \tilde{h}_{pq} \partial \tilde{h}_{rs}}\nabla_{i}\tilde{h}_{pq}\nabla_{i}\tilde{h}_{rs}\\
        &\;\;\;\;\;\;\;\;=\nabla_{ii}\tilde{h}_{11}+2\sum_{p>1}\frac{(\nabla_{i}\tilde{h}_{1p})^{2}}{\kappa_{1}-\tilde{\kappa}_{p}}=\nabla_{ii}h_{11}+2\sum_{p>1}\frac{(\nabla_{i}h_{1p})^{2}}{\kappa_{1}-\tilde{\kappa}_{p}},
    \end{split}
\end{equation*}where we heve used the fact $\tilde{\kappa}_{1}=\kappa_{1}$ at $X_{0}$.
Substituting the last equation above into \eqref{(3.2)}\begin{equation}\label{(3.3)}
    0\geq \frac{\nabla_{ii}h_{11}}{\kappa_{1}}+2\sum_{p>1}\frac{(\nabla_{i}h_{1p})^{2}}{\kappa_{1}(\kappa_{1}-\tilde{\kappa}_{p})}-\frac{(\nabla_{i}h_{11})^{2}}{\kappa_1^2}-N\frac{\nabla_{ii}\nu^{n+1}}{\nu^{n+1}}.
\end{equation}By Ricci identity \eqref{Ricci}, we have
\begin{align*}
\nabla_{ii}h_{11}=\nabla_{11}h_{ii}+\kappa_1 ^2\kappa_i -\kappa_1\kappa_i ^2-\kappa_1+\kappa_i.
\end{align*}Combining with \eqref{(3.3)} yields
\begin{equation}\label{(3.4)}
    \begin{split}
        0&\geq \frac{\nabla_{11}h_{ii}}{\kappa_{1}}+2\sum_{p>1}\frac{(\nabla_{i}h_{1p})^{2}}{\kappa_{1}(\kappa_{1}-\tilde{\kappa}_{p})}-\frac{(\nabla_{i}h_{11})^{2}}{\kappa_1^2}\\
        &\;\;\;-N\frac{\nabla_{ii}\nu^{n+1}}{\nu^{n+1}}+\kappa_{1}\kappa_{i}-\kappa_{i}^{2}-1+\frac{\kappa_{i}}{\kappa_{1}}.
    \end{split}
\end{equation}
Contracting \eqref{(3.4)} with $S_{n}^{ii}$, we have
\begin{equation}\label{(3.5)}
    \begin{split}
        0&\geq \sum_{i}\frac{S_{n}^{ii}\nabla_{11}h_{ii}}{\kappa_{1}}+2\sum_{i}\sum_{p>1}\frac{S_{n}^{ii}(\nabla_{i}h_{1p})^{2}}{\kappa_{1}(\kappa_{1}-\tilde{\kappa}_{p})}-\sum_{i}\frac{S_{n}^{ii}(\nabla_{i}h_{11})^{2}}{\kappa_1^2}\\
        &\;\;\;-N\sum_{i}\frac{S_{n}^{ii}\nabla_{ii}\nu^{n+1}}{\nu^{n+1}}+\kappa_{1}\sum_{i}S_{n}^{ii}\kappa_{i}-\sum_{i}S_{n}^{ii}\kappa_{i}^{2}-\sum_{i}S_{n}^{ii}+\sum_{i}\frac{S_{n}^{ii}\kappa_{i}}{\kappa_{1}}.
    \end{split}
\end{equation}
Differentiating \eqref{S_n} twice, we have\begin{equation}\label{(3.6)}
    \sum_{i}S_{n}^{ii}\nabla_{11}h_{ii}=-\sum_{p,q,r,s}S_{n}^{pq,rs}\nabla_{1}h_{pq}\nabla_{1}h_{rs}.
\end{equation}
Substituting \eqref{(3.6)} into \eqref{(3.5)} yields
\begin{equation}\label{(3.7)}
    \begin{split}
        0&\geq -\frac{1}{\kappa_{1}}\sum_{p,q,r,s}S_{n}^{pq,rs}\nabla_{1}h_{pq}\nabla_{1}h_{rs}+2\sum_{i}\sum_{p>1}\frac{S_{n}^{ii}(\nabla_{i}h_{1p})^{2}}{\kappa_{1}(\kappa_{1}-\tilde{\kappa}_{p})}-\sum_{i}\frac{S_{n}^{ii}(\nabla_{i}h_{11})^{2}}{\kappa_1^2}\\
        &\;\;\;-N\sum_{i}\frac{S_{n}^{ii}\nabla_{ii}\nu^{n+1}}{\nu^{n+1}}+\left(\kappa_{1}+\frac{1}{\kappa_{1}}\right)\sum_{i}S_{n}^{ii}\kappa_{i}-\sum_{i}S_{n}^{ii}\kappa_{i}^{2}-\sum_{i}S_{n}^{ii}.
    \end{split}
\end{equation}
By Lemma \ref{lem2.1} (v) and Lemma \ref{L1}, we have
\begin{equation}\label{(3.8)}
    \begin{split}
        0&\geq -\frac{1}{\kappa_{1}}\sum_{p,q,r,s}S_{n}^{pq,rs}\nabla_{1}h_{pq}\nabla_{1}h_{rs}+2\sum_{i}\sum_{p>1}\frac{S_{n}^{ii}(\nabla_{i}h_{1p})^{2}}{\kappa_{1}(\kappa_{1}-\tilde{\kappa}_{p})}-\sum_{i}\frac{S_{n}^{ii}(\nabla_{i}h_{11})^{2}}{\kappa_1^2}\\
        &\;\;\;-2N\sum_{i}S_{n}^{ii}\frac{u_{i}}{u}\frac{\nabla_{i}\nu^{n+1}}{\nu^{n+1}}+\left(\kappa_{1}+\frac{1}{\kappa_{1}}\right)\sum_{i}S_{n}^{ii}\kappa_{i}-n\sigma\frac{1+(\nu^{n+1})^{2}}{\nu^{n+1}}\\
        &\;\;\;+(N-1)\left(\sum_{i}S_{n}^{ii}+\sum_{i}S_{n}^{ii}\kappa_{i}^{2}\right).
    \end{split}
\end{equation}
By the first inequality in Lemma \ref{L1}, we have
\begin{equation}\label{(3.9)}
    \begin{split}
        0&\geq -\frac{1}{\kappa_{1}}\sum_{p,q,r,s}S_{n}^{pq,rs}\nabla_{1}h_{pq}\nabla_{1}h_{rs}+2\sum_{i}\sum_{p>1}\frac{S_{n}^{ii}(\nabla_{i}h_{1p})^{2}}{\kappa_{1}(\kappa_{1}-\tilde{\kappa}_{p})}-\sum_{i}\frac{S_{n}^{ii}(\nabla_{i}h_{11})^{2}}{\kappa_1^2}\\
        &\;\;\;-2N\sum_{i}S_{n}^{ii}\frac{u_{i}}{u}\frac{\nabla_{i}\nu^{n+1}}{\nu^{n+1}}+\left(\kappa_{1}+\frac{1}{\kappa_{1}}\right)\sum_{i}S_{n}^{ii}\kappa_{i}-CN\\
        &\;\;\;+(N-1)\left(\sum_{i}S_{n}^{ii}+\sum_{i}S_{n}^{ii}\kappa_{i}^{2}\right),
    \end{split}
\end{equation}where $C=C(n,\Omega,\sigma,A,\alpha)>0$ denotes a universal constant that may vary between lines. \par By Lemma 2.5 in \cite{PR}, we have\begin{equation*}
\sum_{p,q,r,s}S_{n}^{pq,rs}\nabla_{1}h_{pq}\nabla_{1}h_{rs}=\sum_{p\neq q}S_{n}^{pp,qq}\nabla_{1}h_{pp}\nabla_{1}h_{qq}-\sum_{p\neq q}S_{n}^{pp,qq}(\nabla_{1}h_{pq})^{2}
\end{equation*}Inserting into \eqref{(3.9)} yields\begin{equation}\label{(3.10)}
    \begin{split}
        0&\geq -\sum_{p\neq q}\frac{S_{n}^{pp,qq}\nabla_{1}h_{pp}\nabla_{1}h_{qq}}{\kappa_{1}}+\sum_{p\neq q}\frac{S_{n}^{pp,qq}(\nabla_{1}h_{pq})^{2}}{\kappa_{1}}+2\sum_{i}\sum_{p>1}\frac{S_{n}^{ii}(\nabla_{i}h_{1p})^{2}}{\kappa_{1}(\kappa_{1}-\tilde{\kappa}_{p})}\\
        &\;\;\;-2N\sum_{i}S_{n}^{ii}\frac{u_{i}}{u}\frac{\nabla_{i}\nu^{n+1}}{\nu^{n+1}}+\left(\kappa_{1}+\frac{1}{\kappa_{1}}\right)\sum_{i}S_{n}^{ii}\kappa_{i}-\sum_{i}\frac{S_{n}^{ii}(\nabla_{i}h_{11})^{2}}{\kappa_1^2}\\
        &\;\;\;+(N-1)\left(\sum_{i}S_{n}^{ii}+\sum_{i}S_{n}^{ii}\kappa_{i}^{2}\right)-CN.
    \end{split}
\end{equation}Assume $\kappa_1$ has multiplicity $m$, then we have\begin{equation*}
    \sum_{p\neq q}\frac{S_{n}^{pp,qq}(\nabla_{1}h_{pq})^{2}}{\kappa_{1}}\geq 2\sum_{p>m}\frac{S_{n}^{11,pp}(\nabla_{p}h_{11})^{2}}{\kappa_1 }\geq 2\sum_{p>m}\frac{(S_{n}^{pp}-S_{n}^{11})(\nabla_{p}h_{11})^{2}}{\kappa_1 (\kappa_1 -\tilde{\kappa}_p )} 
\end{equation*}Substituting into \eqref{(3.10)} gives\begin{equation}\label{(3.11)}
    \begin{split}
        0&\geq -\sum_{p\neq q}\frac{S_{n}^{pp,qq}\nabla_{1}h_{pp}\nabla_{1}h_{qq}}{\kappa_{1}}+2\sum_{p>m}\frac{(S_{n}^{pp}-S_{n}^{11})(\nabla_{p}h_{11})^{2}}{\kappa_1 (\kappa_1 -\tilde{\kappa}_p )}\\
        &\;\;\;-2N\sum_{i}S_{n}^{ii}\frac{u_{i}}{u}\frac{\nabla_{i}\nu^{n+1}}{\nu^{n+1}}+\left(\kappa_{1}+\frac{1}{\kappa_{1}}\right)\sum_{i}S_{n}^{ii}\kappa_{i}+2\sum_{i}\sum_{p>1}\frac{S_{n}^{ii}(\nabla_{i}h_{1p})^{2}}{\kappa_{1}(\kappa_{1}-\tilde{\kappa}_{p})}\\
        &\;\;\;-\sum_{i}\frac{S_{n}^{ii}(\nabla_{i}h_{11})^{2}}{\kappa_1^2}+(N-1)\left(\sum_{i}S_{n}^{ii}+\sum_{i}S_{n}^{ii}\kappa_{i}^{2}\right)-CN.
    \end{split}
\end{equation}
Discarding certain nonnegative terms, we obtain 
\begin{align*}
2\sum_i\sum_{p\neq 1}\frac{S_{n}^{ii}(\nabla_{i}h_{1p})^2}{\kappa_1 (\kappa_1-\tilde{\kappa}_p)}&\geq 2\sum_{p\neq 1}\frac{S_{n}^{pp}(\nabla_{p}h_{1p}^2)}{\kappa_1 (\kappa_1-\tilde{\kappa}_p)}+2\sum_{p\neq 1}\frac{S_{n}^{11}(\nabla_{1}h_{1p})^2}{\kappa_1 (\kappa_1-\tilde{\kappa}_p)}\\
&=2\sum_{i\neq 1}\frac{S_{n}^{ii}(\nabla_{1}h_{ii})^2}{\kappa_1 (\kappa_1-\tilde{\kappa}_i)}+2\sum_{i\neq 1}\frac{S_{n}^{11}(\nabla_{i}h_{11})^2}{\kappa_1 (\kappa_1-\tilde{\kappa}_i)}.
\end{align*}
Combining this with \eqref{(3.11)}  yields
     \begin{equation}\label{(3.12)}
    \begin{split}
        0&\geq -\sum_{p\neq q}\frac{S_{n}^{pp,qq}\nabla_{1}h_{pp}\nabla_{1}h_{qq}}{\kappa_{1}}+2\sum_{i>m}\frac{(S_{n}^{ii}-S_{n}^{11})(\nabla_{i}h_{11})^{2}}{\kappa_1 (\kappa_1 -\tilde{\kappa}_i )}+2\sum_{i\neq 1}\frac{S_{n}^{11}(\nabla_{i}h_{11})^2}{\kappa_1 (\kappa_1-\tilde{\kappa}_i)}\\
        &\;\;\;-2N\sum_{i}S_{n}^{ii}\frac{u_{i}}{u}\frac{\nabla_{i}\nu^{n+1}}{\nu^{n+1}}+\left(\kappa_{1}+\frac{1}{\kappa_{1}}\right)\sum_{i}S_{n}^{ii}\kappa_{i}+2\sum_{i\neq 1}\frac{S_{n}^{ii}(\nabla_{1}h_{ii})^2}{\kappa_1 (\kappa_1-\tilde{\kappa}_i)}\\
        &\;\;\;-\sum_{i}\frac{S_{n}^{ii}(\nabla_{i}h_{11})^{2}}{\kappa_1^2}+(N-1)\left(\sum_{i}S_{n}^{ii}+\sum_{i}S_{n}^{ii}\kappa_{i}^{2}\right)-CN.
    \end{split}
\end{equation}
By Lemma \ref{lem2.1} (i) and $\kappa\in \widetilde{\Gamma}_{n}$, we have\begin{equation*}
    S_{n}^{11,22,\cdots,n-1n-1}(\kappa)=S_{1}(\kappa|12\cdots n-1)=1+\alpha\kappa_{n}>0.
\end{equation*}We thus conclude that $\kappa_{i}>-\frac{1}{\alpha},\;i=1,2,\cdots,n$.
\par We now address the troublesome third-order terms. We claim that\begin{equation}\label{(3.13)}
    \begin{split}
        &2\sum_{i>m}\frac{(S_{n}^{ii}-S_{n}^{11})(\nabla_{i}h_{11})^{2}}{\kappa_1 (\kappa_1 -\tilde{\kappa}_i )}+2\sum_{i\neq 1}\frac{S_{n}^{11}(\nabla_{i}h_{11})^2}{\kappa_1 (\kappa_1-\tilde{\kappa}_i)}-\sum_{i\neq 1}\frac{S_{n}^{ii}(\nabla_{i}h_{11})^{2}}{\kappa_{1}^{2}}\\
        & \geq \frac{1}{2}\sum_{i\neq 1}\frac{S_{n}^{ii}(\nabla_{i}h_{11})^{2}}{\kappa_{1}^{2}}.
    \end{split}
\end{equation}Indeed, assume that $\kappa_{1}>\frac{3}{\alpha}+3$, we have \begin{equation*}
   \begin{split}
        &\;\;\;\;2\sum_{i>m}\frac{(S_{n}^{ii}-S_{n}^{11})(\nabla_{i}h_{11})^{2}}{\kappa_1 (\kappa_1 -\tilde{\kappa}_i )}+2\sum_{i\neq 1}\frac{S_{n}^{11}(\nabla_{i}h_{11})^2}{\kappa_1 (\kappa_1-\tilde{\kappa}_i)}-\sum_{i\neq 1}\frac{S_{n}^{ii}(\nabla_{i}h_{11})^{2}}{\kappa_{1}^{2}}\\
        &=2\sum_{i>m}\frac{(S_{n}^{ii}-S_{n}^{11})(\nabla_{i}h_{11})^{2}}{\kappa_1 (\kappa_1 -\tilde{\kappa}_i )}+2\sum_{1<i\leq m}\frac{S_{n}^{11}(\nabla_{i}h_{11})^{2}}{\kappa_{1}(\kappa_{1}-\tilde{\kappa}_{1})}+2\sum_{i>m}\frac{S_{n}^{11}(\nabla_{i}h_{11})^{2}}{\kappa_{1}(\kappa_{1}-\tilde{\kappa}_{1})}\\
        &\;\;\;\;-\sum_{1<i\leq m}\frac{S_{n}^{ii}(\nabla_{i}h_{11})^{2}}{\kappa_{1}^{2}}-\sum_{i>m}\frac{S_{n}^{ii}(\nabla_{i}h_{11})^{2}}{\kappa_{1}^{2}}\\
        &=\sum_{i>m}\frac{S_{n}^{11}(\nabla_{i}h_{11})^{2}}{\kappa_{1}}\left(\frac{2}{\kappa_{1}-\tilde{\kappa}_{i}}-\frac{2}{\kappa_{1}-\tilde{\kappa}_{i}}\right)+\sum_{i>m}\frac{S_{n}^{ii}(\nabla_{i}h_{11})^{2}}{\kappa_{1}}\left(\frac{2}{\kappa_{1}-\tilde{\kappa}_{i}}-\frac{1}{\kappa_{1}}\right)\\
        &\;\;\;\;+2\sum_{1<i\leq m}\frac{S_{n}^{11}(\nabla_{i}h_{11})^{2}}{\kappa_{1}(\kappa_{1}-\tilde{\kappa}_{1})}-\sum_{1<i\leq m}\frac{S_{n}^{ii}(\nabla_{i}h_{11})^{2}}{\kappa_{1}^{2}}\\
        &=\sum_{i>m}\frac{S_{n}^{ii}(\nabla_{i}h_{11})^{2}}{\kappa_{1}}\frac{\kappa_{1}+\tilde{\kappa}_{i}}{\kappa_{1}(\kappa_{1}-\tilde{\kappa}_{i})}+\sum_{1<i\leq m}\frac{S_{n}^{11}(\nabla_{i}h_{11})^{2}}{\kappa_{1}}\left(\frac{2}{\kappa_{1}-\tilde{\kappa}_{i}}-\frac{1}{\kappa_{1}}\right)\\
        &=\left(2\kappa_{1}-1\right) \sum_{1<i\leq m}\frac{S_{n}^{ii}(\nabla_{i}h_{11})^{2}}{\kappa_{1}^{2}}+\sum_{i>m}\frac{S_{n}^{ii}(\nabla_{i}h_{11})^{2}}{\kappa_{1}^{2}}\frac{\kappa_{1}+\tilde{\kappa}_{i}}{\kappa_{1}-\tilde{\kappa}_{i}}\\
        & \geq \frac{1}{2}\sum_{1<i\leq m}\frac{S_{n}^{ii}(\nabla_{i}h_{11})^{2}}{\kappa_{1}^{2}}+\sum_{i> m}\frac{S_{n}^{ii}(\nabla_{i}h_{11})^{2}}{\kappa_{1}^{2}} \frac{a+b-2}{a-b},
   \end{split} 
\end{equation*}where $a=\kappa_{1}+1,\;b=\kappa_{i}$. Direct calculation shows 
\begin{equation*}
    \begin{split}
        \sum_{i>m}\frac{S_{n}^{ii}(\nabla_{i}h_{11})^{2}}{\kappa_{1}^{2}} \frac{a+b-2}{a-b}&=\sum_{i>m}\frac{S_{n}^{ii}(\nabla_{i}h_{11})^{2}}{\kappa_{1}^{2}} \left(\frac{a+b-2}{a-b}-\frac{1}{2}\right)+\frac{1}{2}\sum_{i>m}\frac{S_{n}^{ii}(\nabla_{i}h_{11})^{2}}{\kappa_{1}^{2}}\\
        &=\sum_{i>m}\frac{S_{n}^{ii}(\nabla_{i}h_{11})^{2}}{\kappa_{1}^{2}}\frac{a+3b-4}{2(a-b)}+\frac{1}{2}\sum_{i>m}\frac{S_{n}^{ii}(\nabla_{i}h_{11})^{2}}{\kappa_{1}^{2}}\\
        &>\frac{1}{2}\sum_{i>m}\frac{S_{n}^{ii}(\nabla_{i}h_{11})^{2}}{\kappa_{1}^{2}},
    \end{split}
\end{equation*}where we used the fact $a>\frac{3}{\alpha}+4$ and $a-b>0$. \par Thus, claim \eqref{(3.13)} is valid. Inserting \eqref{(3.13)} into \eqref{(3.12)} yields 
\begin{equation}\label{(3.14)}
    \begin{split}
        0&\geq -\sum_{p\neq q}\frac{S_{n}^{pp,qq}\nabla_{1}h_{pp}\nabla_{1}h_{qq}}{\kappa_{1}}+2\sum_{i\neq 1}\frac{S_{n}^{ii}(\nabla_{1}h_{ii})^2}{\kappa_1 (\kappa_1-\tilde{\kappa}_i)}-\frac{S_{n}^{11}(\nabla_{1}h_{11})^{2}}{\kappa_{1}^{2}}\\
        &\;\;\;+\frac{1}{2}\sum_{i\neq 1}\frac{S_{n}^{ii}(\nabla_{i}h_{11})^{2}}{\kappa_{1}^{2}}-2N\sum_{i}S_{n}^{ii}\frac{u_{i}}{u}\frac{\nabla_{i}\nu^{n+1}}{\nu^{n+1}}+\left(\kappa_{1}+\frac{1}{\kappa_{1}}\right)\sum_{i}S_{n}^{ii}\kappa_{i}\\
        &\;\;\;+(N-1)\left(\sum_{i}S_{n}^{ii}+\sum_{i}S_{n}^{ii}\kappa_{i}^{2}\right)-CN.
    \end{split}
\end{equation}Since $\kappa_{i}>-\frac{1}{\alpha},\;i=1,2,\cdots,n$, we have \begin{equation*}
    \frac{2}{\kappa_{1}(\kappa_{1}-\tilde{\kappa}_{1})} \geq \frac{2}{\kappa_{1}(\kappa_{1}+\frac{1}{\alpha}+1)}\geq \frac{2\alpha}{\alpha+1}\frac{1}{\kappa_{1}^{2}}.
\end{equation*}Combining \eqref{(3.14)}, we obtain
\begin{equation}\label{(3.15)}
    \begin{split}
        0&\geq -\sum_{p\neq q}\frac{S_{n}^{pp,qq}\nabla_{1}h_{pp}\nabla_{1}h_{qq}}{\kappa_{1}}+C\sum_{i\neq 1}\frac{S_{n}^{ii}(\nabla_{1}h_{ii})^2}{\kappa_1^{2}}-\frac{S_{n}^{11}(\nabla_{1}h_{11})^{2}}{\kappa_{1}^{2}}\\
        &\;\;\;+\frac{1}{2}\sum_{i\neq 1}\frac{S_{n}^{ii}(\nabla_{i}h_{11})^{2}}{\kappa_{1}^{2}}-2N\sum_{i}S_{n}^{ii}\frac{u_{i}}{u}\frac{\nabla_{i}\nu^{n+1}}{\nu^{n+1}}+\left(\kappa_{1}+\frac{1}{\kappa_{1}}\right)\sum_{i}S_{n}^{ii}\kappa_{i}\\
        &\;\;\;+(N-1)\left(\sum_{i}S_{n}^{ii}+\sum_{i}S_{n}^{ii}\kappa_{i}^{2}\right)-CN.
    \end{split}
\end{equation}Applying Lemma \ref{lem2.3}, we have
\begin{equation}\label{(3.16)}
    \begin{split}
        0&\geq \frac{1}{2}\sum_{i\neq 1}\frac{S_{n}^{ii}(\nabla_{i}h_{11})^{2}}{\kappa_{1}^{2}}-2N\sum_{i}S_{n}^{ii}\frac{u_{i}}{u}\frac{\nabla_{i}\nu^{n+1}}{\nu^{n+1}}+\left(\kappa_{1}+\frac{1}{\kappa_{1}}\right)\sum_{i}S_{n}^{ii}\kappa_{i}\\
        &\;\;\;+(N-1)\left(\sum_{i}S_{n}^{ii}+\sum_{i}S_{n}^{ii}\kappa_{i}^{2}\right)-CN.
    \end{split}
\end{equation}By Lemma \ref{lem2.1} and $\sigma_{n}>-A$, we have\begin{equation*}
    \begin{split}
        \left(\kappa_{1}+\frac{1}{\kappa_{1}}\right)\sum_{i}S_{n}^{ii}\kappa_{i}&=\left(\kappa_{1}+\frac{1}{\kappa_{1}}\right)\left[(n-1)\sigma_{n-1}+n\alpha\sigma_{n}\right] \\
        &\geq -n\alpha\left(\kappa_{1}+\frac{1}{\kappa_{1}}\right)A\geq -C\kappa_{1}.
    \end{split}
\end{equation*} Inserting into \eqref{(3.16)} yields 
\begin{equation}\label{(3.17)}
    \begin{split}
        0&\geq \frac{1}{2}\sum_{i\neq 1}\frac{S_{n}^{ii}(\nabla_{i}h_{11})^{2}}{\kappa_{1}^{2}}-2N\sum_{i}S_{n}^{ii}\frac{u_{i}}{u}\frac{\nabla_{i}\nu^{n+1}}{\nu^{n+1}}-C\kappa_{1}\\
        &\;\;\;+(N-1)\left(\sum_{i}S_{n}^{ii}+\sum_{i}S_{n}^{ii}\kappa_{i}^{2}\right)-CN.
    \end{split}
\end{equation}By Lemma \ref{L1} and $\kappa_{1}$ sufficiently large, we can discard a positive term\begin{equation*}
    -2NS_{n}^{11}\frac{u_{1}}{u}\frac{\nabla_{1}\nu^{n+1}}{\nu^{n+1}}=-2NS_{n}^{11}\frac{u_{1}^{2}}{u^{2}}\frac{(\nu^{n+1}-\kappa_{1})}{\nu^{n+1}}
\end{equation*}Consequently, \eqref{(3.17)} implies
    \begin{equation}\label{(3.18)}
    \begin{split}
        0&\geq \frac{1}{2}\sum_{i\neq 1}\frac{S_{n}^{ii}(\nabla_{i}h_{11})^{2}}{\kappa_{1}^{2}}-2N\sum_{i\neq 1}S_{n}^{ii}\frac{u_{i}}{u}\frac{\nabla_{i}\nu^{n+1}}{\nu^{n+1}}-C\kappa_{1}\\
        &\;\;\;+(N-1)\left(\sum_{i}S_{n}^{ii}+\sum_{i}S_{n}^{ii}\kappa_{i}^{2}\right)-CN.
    \end{split}
\end{equation}By critical equation \eqref{(3.1)}, we have 
\begin{equation}\label{(3.19)}
    \begin{split}
        0&\geq \frac{N^{2}}{2}\sum_{i\neq 1}\frac{S_{n}^{ii}(\nabla_{i}\nu^{n+1})^{2}}{(\nu^{n+1})^{2}}-2N\sum_{i\neq 1}S_{n}^{ii}\frac{u_{i}}{u}\frac{\nabla_{i}\nu^{n+1}}{\nu^{n+1}}-C\kappa_{1}\\
        &\;\;\;+(N-1)\left(\sum_{i}S_{n}^{ii}+\sum_{i}S_{n}^{ii}\kappa_{i}^{2}\right)-CN\\
        &\geq -2\sum_{i\neq 1}S_{n}^{ii}\frac{u_{i}^{2}}{u^{2}}+(N-1)\left(\sum_{i}S_{n}^{ii}+\sum_{i}S_{n}^{ii}\kappa_{i}^{2}\right)-C\kappa_{1}-CN.
    \end{split}
\end{equation}Choose $N$ sufficiently large, we have 
\begin{equation}\label{(3.20)}
    \begin{split}
        0 \geq (N-1)\sum_{i}S_{n}^{ii}\kappa_{i}^{2}-C\kappa_{1}-CN.
    \end{split}
\end{equation}By Lemma \ref{lem2.2}, we have 
\begin{equation}\label{(3.21)}
    \begin{split}
         0 &\geq (N-1)\sum_{i}S_{n}^{ii}\kappa_{i}^{2}-C\kappa_{1}-CN\\
         &\geq (N-1)S_{n}^{11}\kappa_{1}^{2}-C\kappa_{1}-CN\\
         &\geq (N-1)\theta\sigma\kappa_{1}-C\kappa_{1}-CN
    \end{split}
\end{equation}For sufficiently large $N$, \eqref{(3.21)} implies $\kappa_{1} \leq C$.
\end{proof}


\begin{thebibliography}{10}
\bibitem{A82}
M. T. Anderson, 
{\em Complete minimal varieties in hyperbolic space}, 
Invent. Math. 69 (1982), no. 3, 477–494. 

\bibitem{A83}
M. T. Anderson, 
{\em  Complete minimal hypersurfaces in hyperbolic $n$-manifolds}, 
Comment. Math. Helv. 58 (1983), no. 2, 264–290.
\bibitem{T}
Y. Tonegawa,
{\em  Existence and regularity of constant mean curvature hypersurfaces in hyperbolic space}, 
Math. Z. 221 (1996), no. 4, 591–615.
\bibitem{Lin}
F.-H. Lin, 
{\em  On the Dirichlet problem for minimal graphs in hyperbolic space}, 
Invent. Math. 96 (1989), no. 3, 593–612.
\bibitem{Lin1}
F.-H. Lin,
{\em Erratum: On the Dirichlet problem for minimal graphs in hyperbolic space},
Invent. Math. 187 (2012), no. 3, 755-757.
\bibitem{NS}
B. Nelli and J. Spruck,
{\em  On the existence and uniqueness of constant mean curvature hypersurfaces in hyperbolic space}, 
Geometric analysis and the calculus of variations, 253–266, Int. Press, Cambridge, MA, 1996.
\bibitem{GS00}
B. Guan and J. Spruck, 
{\em  Hypersurfaces of constant mean curvature in hyperbolic space with prescribed asymptotic boundary at infinity}, 
Amer. J. Math. 122 (2000), no. 5, 1039–1060.
\bibitem{L}
F. Labourie, 
{\em  Problème de Minkowski et surfaces à courbure constante dans les variétés hyperboliques}, 
Bull. Soc. Math. France 119 (1991), no. 3, 307–325.
\bibitem{RS}
H. Rosenberg and J. Spruck, 
{\em  On the existence of convex hypersurfaces of constant Gauss curvature in hyperbolic space}, 
J. Differential Geom. 40 (1994), no. 2, 379–409.
\bibitem{GS11}
B. Guan and J. Spruck, 
{\em Convex hypersurfaces of constant curvature in hyperbolic space}, 
Surveys in geometric analysis and relativity, 241–257, Adv. Lect. Math. (ALM), 20, Int. Press, Somerville, MA, 2011. 

\bibitem{GSS}
B. Guan, J.  Spruck and M.  Szapiel,
{\em  Hypersurfaces of constant curvature in hyperbolic space. I}, 
J. Geom. Anal. 19 (2009), no. 4, 772–795.

\bibitem{GSX}
B. Guan, J. Spruck and L. Xiao, 
{\em  Interior curvature estimates and the asymptotic plateau problem in hyperbolic space}, 
J. Differential Geom. 96 (2014), no. 2, 201–222.
\bibitem{GS10}
B. Guan and J. Spruck, 
{\em  Hypersurfaces of constant curvature in hyperbolic space. II}, 
J. Eur. Math. Soc. (JEMS) 12 (2010), no. 3, 797–817.
\bibitem{S. Lu}
S. Lu,
{\em On the asymptotic Plateau problem in hyperbolic space},
Proc. Amer. Math. Soc. 151 (2023), no. 12, 5443-5451.
\bibitem{Wang1}
B. Wang,
{\em  Curvature estimates for hypersurfaces of constant curvature in hyperbolic space},
arXiv:2205.13739.
\bibitem{Wang2}
B. Wang,
{\em  Hypersurfaces of constant scalar curvature in hyperbolic space with prescribed asymptotic boundary at infinity},
arXiv:2408.07656.
\bibitem{wang3}
B. Wang,
{\em Curvature estimates for hypersurfaces of constant curvature in hyperbolic space II},
arXiv:2505.00760.
\bibitem{DZL}
D. Chen, Z. Sui, L. Sun,
{\em Asymptotic Plateau problem for $2$-convex hypersurface in $\mathbb{H}^4$},
arXiv:2506.00565.
\bibitem{RW}
C. Ren and Z. Wang, 
{\em  On the curvature estimates for Hessian equations}, 
Amer. J. Math. 141 (2019), no. 5, 1281–1315. 
\bibitem{RW1}
C. Ren and Z. Wang,
{\em The global curvature estimate for the $n-2$ Hessian equation},
Cal. Var. Par. Diff. Equ. 62(2023) no.9: 239.
\bibitem{RY}
 R. Shankar and Y. Yuan,
 {\em Hessian estimates for the sigma-$2$ equation in dimension four},
 Ann. of Math.
 201 (2025), no. 2, 489–513.
\bibitem{HR}
H. Hong and R. Zhang,
{\em Curvature estimates for semi-convex solutions of asymptotic Plateau problem in $\mathbb{H}^{n+1}$},
arXiv:2408.09428. 
\bibitem{LRW}
C. Li, C. Ren and Z. Wang,
{\em Curvature estimates for convex solutions of some fully nonlinear Hessian-type equations.},
Calc. Var. Partial Differential Equations.
58(2019), no. 188, 32.
\bibitem{PR}
P. Li and C. Ren,
{\em Pogorelov type $C^{2}$ estimates for sum Hessian equations},
arXiv:2504.06711.
\bibitem{BJL}
B. Guan, J. Spruck and L. Xiao, 
{\em  Interior curvature estimates and the asymptotic plateau problem in hyperbolic space}, 
J. Differential Geom. 96 (2014), no. 2, 201–222.
\bibitem{J.Chu}
J. Chu, 
{\em  A simple proof of curvature estimate for convex solution of $k$-Hessian equation}, 
Proc. Amer. Math. Soc. 149 (2021), no. 8, 3541–3552.
\end{thebibliography}
\end{document}